\newtheorem{tm}{Theorem}
\newtheorem{defi}{Definition}
\newtheorem{rem}{Remark}
\newtheorem{rems}{Remarks}
\newtheorem{lm}{Lemma}
\newtheorem{prop}{Proposition}
\newtheorem{nota}{Notation}
\newtheorem{quest}{Question}
\begin{document}
\title{Degree $6$ hyperbolic polynomials and orders of moduli}
\author{Yousra Gati, Vladimir Petrov Kostov and Mohamed Chaouki Tarchi}

\address{Universit\'e de Carthage, EPT-LIM, Tunisie}
\email{yousra.gati@gmail.com}
\address{Universit\'e C\^ote d’Azur, CNRS, LJAD, France}
\email{vladimir.kostov@unice.fr}
\address{Universit\'e de Carthage, EPT-LIM, Tunisie}
\email{mohamedchaouki.tarchi@gmail.com}
\begin{abstract}
  We consider real univariate degree $d$
  real-rooted polynomials with non-vanishing coefficients. 
  Descartes' rule of signs implies that such a polynomial
  has $\tilde{c}$ positive and
  $\tilde{p}$ negative roots counted with multiplicity, where $\tilde{c}$ and
  $\tilde{p}$ are the numbers of sign changes and sign preservations in the
  sequence of its coefficients, $\tilde{c}+\tilde{p}=d$. For $d=6$,
  we give the exhaustive answer to the question: When the moduli of all $6$
  roots are distinct and arranged on the real positive half-axis,
  in which positions can the moduli of the negative roots be depending on
  the signs of the coefficients?

  {\bf Key words:} real polynomial in one variable; hyperbolic polynomial; sign
  pattern; Descartes'
rule of signs\\

{\bf AMS classification:} 26C10; 30C15
\end{abstract}
\maketitle

\section{Introduction}

A real univariate polynomial is {\em hyperbolic} if all its roots are real. We
consider hyperbolic polynomials with all coefficients non-vanishing. For such
a degree $d$ polynomial, the
classical Descartes' rule of signs implies that the number of its positive
(resp. negative) roots counted with multiplicity is equal to the number
$\tilde{c}$ of sign changes (resp. $\tilde{p}$ of sign
preservations) in the sequence of its coefficients, see
\cite{Ca}, \cite{Cu}, \cite{DG},
\cite{Des}, \cite{Fo}, \cite{Ga}, \cite{J}, \cite{La} or \cite{Mes};
$\tilde{c}+\tilde{p}=d$. This fact,
however, does not answer the following more subtle question:

\begin{quest}\label{quest1}
  For fixed degree $d$, consider the set of hyperbolic polynomials
  with given signs of the
  coefficients and with distinct moduli of roots. Suppose that
  these moduli are arranged in a string
  on the real positive half-axis. Then in
  which positions can the moduli of the negative roots be
  depending on the signs of the coefficients?
\end{quest}

We give the exhaustive answer to the question for $d=6$. For $d\leq 5$, its
answer can be found in \cite{KoCMA23}, see Example~1.1 and Section~3 therein.
In order to recall some other results directly related to
Question~\ref{quest1} we remind the following definition:

\begin{defi}
  {\rm (1) A real polynomial $Q:=\sum _{j=0}^{d}q_jx^j$
  is said to define the {\em sign pattern}
  $\sigma (Q):=({\rm sgn}(q_d)$, $\ldots$,
  ${\rm sgn}(q_0))$. Formally, a sign pattern of length $d+1$
  is a string of $d+1$ signs $+$
  and/or $-$. We operate mainly with sign patterns beginning with a~$+$.
  Thus a sign pattern is completely defined by the corresponding
  {\em change-preservation pattern} (and vice versa)
  which is a $d$-vector whose
  components are the letters $p$ and $c$; when
  $q_jq_{j-1}>0$ (resp. $q_jq_{j-1}<0$), in the $j$th position from the right
  there is a $p$ (resp. a $c$).
  \vspace{1mm}
  
  (2) The {\em order of moduli} defined by the roots of a given hyperbolic
  polynomial $Q$ is denoted as follows. (The general definition should be
  clear from this example.) Suppose that $d=6$ and that there are
  three negative roots $-\gamma _3<-\gamma _2<-\gamma _1$
  and three positive roots $\alpha _1<\alpha _2<\alpha _3$
  (so $\tilde{c}=\tilde{p}=3$), where

  $$\alpha _1<\gamma _1<\gamma _2<\alpha _2<\gamma _3<\alpha _3~.$$
  Then we say that the roots define the order of moduli $PNNPNP$, i.~e. 
  the letters $P$ and $N$ denote the relative positions of the moduli of
  positive and negative roots.
  \vspace{1mm}
  
  (3) For a given degree $d$, a couple (change-preservation pattern,
  order of moduli) (further we say {\em couple} for short) is {\em compatible} 
  if the number of letters $c$ (resp. $p$)
  of the former is equal to the number of letters $P$ (resp. $N$)
  of the latter. A compatible couple is {\em realizable} if there exists a
  hyperbolic polynomial whose coefficients (resp. moduli of roots) define the
  change-preservation pattern (resp. the order of moduli) of the couple.} 
    \end{defi}

We can give now a more precise formulation of Question~\ref{quest1}:

\begin{quest}\label{quest2}
  For a given degree $d$, which compatible couples are realizable?
\end{quest}

There are two extremal situations with regard to Question~\ref{quest2}.

\begin{defi}
  {\rm For a given change-preservation pattern (or, equivalently,
    a sign pattern) one defines the corresponding {\em canonical order of
      moduli} as follows. One reads the pattern from the right and one
    writes the order from the left. To each letter $c$ (resp. $p$) one puts
    in correspondence the letter $P$ (resp. $N$).}
\end{defi}

Each sign pattern (or equivalently change-preservation pattern)
is realizable with its corresponding canonical order, see
\cite[Proposition~1]{KoSe}.

\begin{defi}
    {\rm A change-preservation pattern (or a sign pattern) is {\em canonical}
    if it is realizable only with the corresponding canonical order of moduli.}
\end{defi}

It is shown in \cite[Theorem~7]{KoReMa22} that a sign pattern
is canonical if and only if
it contains no four
consecutive signs $(+,+,-,-)$, $(+,-,-,+)$, $(-,-,+,+)$ or $(-,+,+,-)$. Hence
a change-preservation pattern is canonical if and only if it contains no
string $cpc$ or $pcp$. Canonical sign patterns are exceptional in the sense
that the ratio of their number and the number of all
sign patterns tends to $0$ as $d$ tends to $\infty$, see
\cite[Proposition~10]{KoReMa22}. 

The second extremal situation is the one of {\em rigid} orders of moduli. 

\begin{defi}\label{defirigid}
  {\rm An order of moduli is {\em rigid} if all hyperbolic polynomials with
    this order of moduli define one and the same sign pattern.}
\end{defi}

It is proved that (see \cite[Theorem~8]{KorigMO}) rigid are exactly
the orders of
moduli of the form $PNPNPN\cdots$, $NPNPNP\cdots$, $PP\cdots P$ or
$NN\cdots N$. The corresponding change-preservation patterns are of the form
$\cdots pcpcpc$, $\cdots cpcpcp$, $cc\cdots c$ or $pp\cdots p$. Hence rigid
orders of moduli are also exceptional.

We introduce now the $\mathbb{Z}_2\times \mathbb{Z}_2$-action:

\begin{defi}\label{defiZ2Z2}
  {\rm (1) For a given degree $d$, there are two commuting involutions
    which act on the set of couples. These are}

  $$i_m~:~Q(x)\mapsto (-1)^dQ(-x)~~~\, {\rm and}~~~\, i_r~:~Q(x)\mapsto
  x^dQ(1/x)/Q(0)~.$$
  {\rm The role of the factors $(-1)^d$ and $1/Q(0)$ is to preserve the set
    of monic polynomials. The involution $i_m$ exchanges the letters
    $P$ and $N$ in the order of moduli, the letters $c$ and $p$ in the
    change-preservation pattern and the quantities $\tilde{c}$ and
    $\tilde{p}$. The involution $i_r$ reads orders, 
    patterns and polynomials (modulo the factor $1/Q(0)$) from the right.
    It preserves the quantities $\tilde{c}$ and~$\tilde{p}$.
    \vspace{1mm}

    (2) The {\em orbits} of couples under the
    $\mathbb{Z}_2\times \mathbb{Z}_2$-action
    are of length $4$ or $2$. One can consider orbits also only of sign
    patterns or of orders of moduli.}
\end{defi}

\begin{rems}\label{remsorbit}
  {\rm (1) Given any sign pattern $\sigma$
    its orbit can
    be of length $2$ only if either $i_r(\sigma )=\sigma$ or
    $i_ri_m(\sigma )=\sigma$. Indeed, one always has $i_m(\sigma )\neq \sigma$.
    All couples of a given orbit are simultaneously (non-)realizable.
    \vspace{1mm}

(2) In the text we use the following notation -- if a sign pattern consists
of $m_1$ pluses followed by $m_2$ minuses followed by $m_3$ pluses etc., then
we denote this sign pattern by $\Sigma _{m_1,m_2,m_3,\ldots}$. For $d=6$,
an example of an orbit of a sign pattern of length $2$
is the one of $\Sigma _{3,1,3}$ with $\tilde{c}=2$, $\tilde{p}=4$ and
$i_r(\Sigma _{3,1,3})=\Sigma _{3,1,3}$. The other sign pattern of the orbit is
$i_m(\Sigma _{3,1,3})=i_mi_r(\Sigma _{3,1,3})=\Sigma_{1,1,3,1,1}$,
with $\tilde{c}=4$, $\tilde{p}=2$.}
\end{rems}

The involution $i_m$ exchanging the quantities $\tilde{c}$ and $\tilde{p}$,
when studying the realizability of the couples with $d=6$ it suffices to
consider the cases $\tilde{c}=0$, $1$, $2$ and $3$. The first three of them
have been thoroughly analysed in~\cite{KoSoz19}
(we recall the corresponding results in Section~\ref{secprelim}),
so we concentrate on the case $\tilde{c}=3$. 

\begin{lm}\label{lm1}
For $d=6$, there are $7$ orbits of sign patterns with three sign
changes:

$$\begin{array}{cclccl}
A&:&\{ \Sigma _{3,1,2,1},~\Sigma _{1,2,1,3},~\Sigma _{2,3,1,1},~\Sigma
_{1,1,3,2}\} ~,&D&:&\{\Sigma _{4,1,1,1},~\Sigma _{1,1,1,4}\}~, \\ \\
B&:&\{ \Sigma _{1,4,1,1},~
\Sigma _{1,1,4,1},~\Sigma _{3,1,1,2},~\Sigma _{2,1,1,3}\}~,&E&:&\{ \Sigma
_{2,2,2,1},~\Sigma _{1,2,2,2}\}~, \\ \\ 
C&:&\{ \Sigma _{2,1,2,2},~\Sigma _{2,2,1,2},~
\Sigma _{1,2,3,1},~\Sigma _{1,3,2,1}\}~,&F&:&\{ \Sigma _{3,2,1,1},~\Sigma
_{1,1,2,3}\} \\ \\
&&{\rm and}&G&:&\{ \Sigma _{1,3,1,2},~\Sigma _{2,1,3,1}\} ~.\end{array}$$
Out of these, canonical are exactly $B$, $D$ and $G$.
\end{lm}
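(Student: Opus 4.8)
The plan is to treat a sign pattern with $\tilde c=3$ as the combinatorial datum of where its three sign changes occur. Since the leading sign is normalised to $+$, such a pattern is completely recorded by the set $S\subseteq\{1,\ldots,6\}$ of the $|S|=3$ positions (among the six consecutive pairs of coefficients) carrying a letter $c$; equivalently $S=\{m_1,\,m_1+m_2,\,m_1+m_2+m_3\}$ for the block pattern $\Sigma_{m_1,m_2,m_3,m_4}$ with $m_1+m_2+m_3+m_4=7$. There are $\binom{6}{3}=20$ such sets. First I would check that in this encoding the two generators act very simply: $i_m$ (which swaps every $c$ with a $p$) is the complementation $S\mapsto\{1,\ldots,6\}\setminus S$, while $i_r$ (which reverses the pattern) is the reflection $S\mapsto\{7-s:s\in S\}$. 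These two involutions commute, reproducing the $\mathbb{Z}_2\times\mathbb{Z}_2$-action.

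Next I would count the orbits by orbit--stabiliser. The key observation is that neither $i_m$ nor $i_r$ can fix a set of size $3$: complementation never fixes a set, being disjoint from it, and reflection fixes only unions of the pairs $\{1,6\},\{2,5\},\{3,4\}$, which have even cardinality. Hence every stabiliser is contained in $\{\mathrm{id},\,i_mi_r\}$, so every orbit has length $4$ or $2$, the length-$2$ orbits being exactly the pairs $\{S,S^{c}\}$ with $i_mi_r(S)=S$. A short computation shows that $i_mi_r(S)=S$ holds precisely when $S$ is a transversal of the three pairs above, i.e. contains exactly one element of each; there are $2^{3}=8$ such sets, giving $4$ orbits of length $2$. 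The remaining $20-8=12$ sets then form $3$ orbits of length $4$, for a total of $7$ orbits. Reading off one representative per orbit together with its companions, and converting each $S$ back to $(m_1,m_2,m_3,m_4)$, yields precisely the list $A$--$G$.

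For the second assertion I would invoke the criterion of \cite[Theorem~7]{KoReMa22}: a change-preservation pattern is canonical iff it contains no substring $cpc$ or $pcp$. This property is manifestly invariant under the group, since complementation interchanges the two forbidden words while reflection fixes each of them, both being palindromes; consequently canonicity is constant along orbits and may be tested on a single representative. Writing out the six-letter change-preservation string of one member of each orbit and scanning for $cpc$ and $pcp$, I would find that $B$ ($cpppcc$), $D$ ($pppccc$) and $G$ ($cppccp$) contain neither, whereas $A$ ($ppccpc$), $C$ ($pccpcp$), $E$ ($pcpcpc$) and $F$ ($ppcpcc$) each contain at least one, giving exactly $B$, $D$, $G$ as the canonical orbits.

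The bulk of the work is bookkeeping rather than mathematics, and the one place where care is genuinely needed is the translation of the involution $i_m$ into the block notation $\Sigma_{\bullet}$: unlike $i_r$, it is not a simple reversal of $(m_1,m_2,m_3,m_4)$, and it is cleanest handled through the change-preservation string, equivalently the complement of $S$. Keeping the three encodings --- sign pattern, change-preservation string, and subset $S$ --- consistent throughout is the main thing to get right; once the subset model is in place, both the orbit count and the canonicity test reduce to the short finite verifications sketched above.
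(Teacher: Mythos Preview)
Your argument is correct and considerably more structured than the paper's own proof, which simply counts the $20$ compositions $m_1+m_2+m_3+m_4=7$ by partition type (one part equal to $4$; one part equal to $3$; three parts equal to $2$) and then declares both the orbit decomposition and the canonicity statement to be ``checked straightforwardly''. Your subset model $S\subseteq\{1,\ldots,6\}$ and the identification of $i_m$, $i_r$ with complementation and reflection turn the orbit count into a clean orbit--stabiliser computation: the observation that neither involution can fix a $3$-set, together with the characterisation of $i_mi_r$-fixed sets as transversals of the three reflection pairs, gives the $4+3=7$ orbit count without any case-by-case listing. What the paper's approach buys is brevity; what yours buys is an explanation of \emph{why} there are exactly three long orbits and four short ones, and an a~priori reason (invariance of the forbidden words $cpc$, $pcp$ under the group) for testing canonicity on a single representative per orbit rather than on all twenty patterns. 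Your change-preservation strings for the seven representatives are correct, and the scan for $cpc$/$pcp$ indeed isolates $B$, $D$, $G$.
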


\begin{rems}\label{remsBDG}
  {\rm (1) For $\sigma =\Sigma _{4,1,1,1}$, $\Sigma _{2,2,2,1}$,
    $\Sigma _{3,2,1,1}$ and $\Sigma _{1,3,1,2}$, one has $i_mi_r(\sigma )=\sigma$.


    (2) The orbits $B$, $D$ and $G$ being canonical,
    they give rise to the following
    realizable couples and only to them:}

  $$\begin{array}{llcl}
    B~:&(\Sigma _{1,4,1,1}~,~PPNNNP)~,&&(\Sigma _{1,1,4,1}~,~PNNNPP)~,\\ \\
    &(\Sigma _{3,1,1,2}~,~NPPPNN)~&{\rm and}&(\Sigma _{2,1,1,3}~,~NNPPPN)~;\\ \\
    D~:&(\Sigma _{4,1,1,1}~,~PPPNNN)&{\rm and}&(\Sigma _{1,1,1,4}~,~NNNPPP)~;\\ \\
    G~:&(\Sigma _{1,3,1,2}~,~NPPNNP)&{\rm and}&(\Sigma _{2,1,3,1}~,~PNNPPN)~.
  \end{array}$$
  \end{rems}

\begin{proof}[Proof of Lemma~\ref{lm1}]
Among the sign patterns of the form $\Sigma_{m_1,m_2,m_3,m_4}$ with
$m_1+m_2+m_3+m_4=7$ and $\tilde{c}=\tilde{p}=3$, all
components $m_i$ must be $\leq 4$. Hence there are exactly
four such sign patterns in which
exactly one component
equals $4$ (the other components equal $1$), exactly twelve in which one
component equals $3$ and exactly
four in which three components equal $2$. These are all the $20$ sign
patterns listed in the lemma. 
The last statement of the lemma is checked straightforwardly.
\end{proof}

Part (2) of Remarks~\ref{remsBDG} settling the cases $B$, $D$ and $G$, we
finish the study of realizability of couples with $d=6$, $\tilde{c}=3$ by 
Theorem~\ref{tmmain}. We remind that by Definition~\ref{defiZ2Z2} and part (1)
of Remarks~\ref{remsorbit} it suffices to give the answer only for one sign
pattern from each of the cases $A$, $C$, $E$ and $F$.

\begin{tm}\label{tmmain}
  (1) The sign pattern $\Sigma _{3,1,2,1}$ is realizable by and only by
  the orders of moduli $PPPNNN$, $PPNPNN$, $PPNNPN$, $PNPPNN$, and $NPPPNN$.
  \vspace{1mm}

  (2) The sign pattern $\Sigma _{2,1,2,2}$ is realizable by and only by
  the orders of moduli  $PNNPPN$, $NPPPNN$, $NPPNPN$, $NPPNNP$, 
  $NPNPPN$ and $NNPPPN$.
  \vspace{1mm}

  (3) The sign pattern $\Sigma _{2,2,2,1}$ is not realizable by and only by
  the following compatible orders of moduli: $NPNPNP$, $NPNNPP$,
  $NNPPNP$, $NNPNPP$, and $NNNPPP$.
  \vspace{1mm}

  (4) The sign pattern $\Sigma _{3,2,1,1}$ is realizable by and only by
  the orders of moduli $PPPNNN$, $PPNPNN$, $PPNNPN$ and $PNPPNN$.
  \vspace{1mm}
  \end{tm}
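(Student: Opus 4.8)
The plan is to fix, in each of the four parts, the representative sign pattern and to decide the status of all $20$ compatible orders of moduli, establishing realizability by explicit construction and non-realizability by sign analysis. Throughout I would write a candidate polynomial as $Q(x)=\prod_{i=1}^{3}(x-\alpha_i)\prod_{j=1}^{3}(x+\gamma_j)$ with all $\alpha_i,\gamma_j>0$, so that the prescribed sign pattern becomes a system of sign conditions on the coefficients $q_k$, which up to sign are the elementary symmetric functions of $\alpha_1,\alpha_2,\alpha_3,-\gamma_1,-\gamma_2,-\gamma_3$, while the order of moduli prescribes how the $\alpha_i$ and $\gamma_j$ interlace by size. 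Two coefficients come essentially for free: $q_0=-\prod_i\alpha_i\prod_j\gamma_j<0$ is forced for every pattern with $\tilde c=3$, and $q_5=\sum_j\gamma_j-\sum_i\alpha_i$, whose required sign is a first cheap necessary condition comparing the total sizes of the two groups of moduli; applying $i_r$ gives the analogous statement for $q_1$.

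Before the case work I would exploit the $\mathbb{Z}_2\times\mathbb{Z}_2$-action. By part~(1) of Remarks~\ref{remsBDG}, the patterns $\Sigma_{2,2,2,1}$ and $\Sigma_{3,2,1,1}$ of parts~(3) and~(4) satisfy $i_mi_r(\sigma)=\sigma$; since $i_mi_r$ sends an order of moduli to its reversal with $P$ and $N$ interchanged and preserves realizability, the set of (non-)realizable orders for each of these two patterns is invariant under that involution. This pairs up the listed orders---for instance, for $\Sigma_{3,2,1,1}$ one checks $PPNNPN\leftrightarrow PNPPNN$ while $PPPNNN$ and $PPNPNN$ are fixed---and so roughly halves the work in parts~(3) and~(4). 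The patterns $\Sigma_{3,1,2,1}$ and $\Sigma_{2,1,2,2}$ of parts~(1) and~(2) carry no such internal symmetry and must be handled order by order.

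For the realizability half I would produce, for each listed order, an explicit hyperbolic polynomial, choosing the $\alpha_i$ and $\gamma_j$ in well-separated clusters arranged in the prescribed order of sizes. When the moduli live on very different scales, each coefficient $q_k$ is dominated by a single monomial in the roots, so its sign can be read off and checked against the pattern directly, and a mild perturbation then keeps all moduli distinct. Where convenient this is organized as a concatenation of a low-degree realizable couple with another, in the spirit of the constructions behind the classification of the cases $\tilde c\le 2$ recalled in Section~\ref{secprelim} and in \cite{KoSoz19}, the cluster separation guaranteeing that both the orders and the sign patterns concatenate as required.

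The non-realizability half is where the real difficulty lies, and I expect it to be the main obstacle. Here one must show that no choice of positive $\alpha_i,\gamma_j$ realizing a given interlacing can produce the prescribed signs of all $q_k$ simultaneously. The outer coefficients $q_0,q_1,q_5$ are controlled as above, but the inner ones $q_2,q_3,q_4$ are sums of products of mixed sign, so the interlacing has to be converted into genuine inequalities among the symmetric functions. I would attack these by grouping the roots into quadratic blocks $(x-\alpha)(x+\gamma)=x^2+(\gamma-\alpha)x-\alpha\gamma$, whose middle sign already records which of the two moduli is larger, and by degenerating extremal configurations---letting two neighbouring moduli coalesce, or sending a cluster to $0$ or to $\infty$---so as to reach a lower-degree situation already settled by the $\tilde c\le 2$ results, the sign forced in the limit then contradicting the target pattern. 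The bookkeeping challenge is to run this uniformly over all forbidden orders and to be sure that the resulting list is exactly complementary to the realizable one, leaving no compatible order unclassified.
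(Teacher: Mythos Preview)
Your plan for the realizability half is sound and matches the paper's approach: explicit hyperbolic polynomials (together with concatenation with degree~$1$ factors) do the job, and your use of the $i_mi_r$-symmetry for $\Sigma_{2,2,2,1}$ and $\Sigma_{3,2,1,1}$ is a legitimate shortcut.

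The gap is in the non-realizability half. Your proposed mechanism---degenerate to a boundary or lower-degree configuration and observe that \emph{in the limit} some coefficient has the wrong sign---does not by itself yield a contradiction in the interior: a coefficient that vanishes on a boundary stratum may well have the required sign throughout the open set you are trying to rule out. What the paper actually uses, and what your sketch is missing, is a topological input: the set $\Pi_6(\sigma)$ of hyperbolic polynomials with sign pattern $\sigma$ is connected (in fact contractible, \cite{KoAnn}), so if two orders $\Omega_1\neq\Omega_2$ are both realizable, any path between them inside $\Pi_6(\sigma)$ must cross the hypersurface $E_6(\sigma)$ where a positive and a negative root have equal modulus. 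This gives Proposition~\ref{propneighbours}: realizability of $\Omega_1$ together with some other order forces realizability of a \emph{neighbour} of $\Omega_1$ (an order obtained by moving one letter $N$ past an adjacent $P$). Consequently, an order all of whose neighbours are already known to be non-realizable is itself non-realizable; and when a neighbour is realizable, the crossing point yields a polynomial of the form $(x^2-1)R$ with the roots of $R$ constrained to one side of $1$, which can then be analysed directly.

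This neighbour argument is not optional. Direct sign inequalities in the style of your $q_1$, $q_4$, $q_5$ computations dispose of many orders, but not all: for instance, for $\Sigma_{3,1,2,1}$ and $\Sigma_{3,2,1,1}$ the orders $[2,0,0,1]$ and $[0,2,0,1]$ are handled in the paper \emph{only} by showing that none of their neighbours is realizable; the block of ten orders with $u_4=0$ (Proposition~\ref{propu4=0}) and the order $NPPPNN$ for $\Sigma_{3,2,1,1}$ are handled by first passing, via the connectivity argument, to a polynomial $(x^2-1)R$ and then analysing $R$. Your quadratic-block grouping and limiting heuristics do not obviously cover these cases, so as written the non-realizability part of your plan is incomplete.
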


The theorem is proved in Section~\ref{secprtmmain}. The method of its proof
and some comments on the theorem are given in Section~\ref{secprelim}.

\section{Comments and the method of proof
  of Theorem~\protect\ref{tmmain}\protect\label{secprelim}}

\subsection{Systems of linear differential equations}

Hyperbolic are often the characteristic polynomials of linear systems of
ordinary differential equations. Consider such a system $dX/dt=AX$, where
$A$ is a real constant $n\times n$-matrix. Suppose that all its eigenvalues
$\lambda _1$, $\ldots$, $\lambda _n$ are
real. This is true, in particular, for symmetric matrices. Suppose also
that they are distinct. Then any component of any solution is of the form
$\sum c_je^{\lambda _jt}$, $c_j\in \mathbb{R}$. For a generic solution, all
coefficients $c_j$ are non-zero.

If the characteristic polynomial of $A$ defines a canonical sign pattern,
then one knows whether the eigenvalue of largest modulus is positive or
negative. Hence one knows (without computing the eigenvalues) whether
a generic solution grows faster in modulus as $t\rightarrow +\infty$
or as $t\rightarrow -\infty$.

\subsection{The results for $d\leq 5$}

We begin by reminding that for $d=6$,
there is just one change-preservation pattern with $\tilde{c}=0$. This is
$pppppp$ and it is realizable with the only compatible order of moduli
$NNNNNN$.

\begin{nota}
  {\rm For $d=6$ and $\tilde{c}=1$ (resp. $\tilde{c}=2$), we denote by
    $u_1$ and $u_2$ (resp. $u_1$, $u_2$ and 
    $u_3$) the number of moduli of negative roots belonging to the
    respective intervals 
    $(0,\alpha _1)$ and $(\alpha _1,+\infty )$ (resp. to $(0,\alpha _1)$,
    $(\alpha _1,\alpha _2)$ and $(\alpha _2,+\infty )$).
    For $d=6$ and $\tilde{c}=3$, we denote by analogy the
    quantities
    $u_1$, $u_2$, $u_3$ and $u_4$ with respect to the intervals
    $(0,\alpha _1)$, $(\alpha _1,\alpha _2)$, $(\alpha _2,\alpha _3)$
    and $(\alpha _3,+\infty )$. Example: the order of moduli $NNPNNN$
    corresponds to $[u_1,u_2]=[2,3]$ 
    while $NPNNPN$ corresponds to $[u_1,u_2,u_3]=[1,2,1]$ and 
    $NPPNNP$ corresponds to $[u_1,u_2,u_3,u_4]=[1,0,2,0]$. There are 6 couples
    $[u_1,u_2]$, $u_1+u_2=5$, 15 triples
    $[u_1,u_2,u_3]$, $u_1+u_2+u_3=4$, and 20 quadruples
    $[u_1,u_2,u_3,u_4]$, $u_1+u_2+u_3+u_4=3$.}
\end{nota}

For $d=6$, $\tilde{c}=1$, we list the orders of moduli
with which the sign patterns
$\Sigma _{m_1,m_2}$, $m_1+m_2=7$, are realizable (see~\cite{KoPuMaDe}):

\begin{equation}\label{eqc1}
  \begin{array}{ll}
 {\rm for}~1\leq m_1<m_2~,&0\leq u_2\leq 2m_1-2~;\\ \\ 
 {\rm for}~1\leq m_2<m_1~,&0\leq u_1\leq 2m_2-2~.\end{array}
\end{equation}
For $d=6$, $\tilde{c}=2$, realizability of couples has been studied
in~\cite{KoSoz19}.
There are two cases of canonical sign patterns. The corresponding couples are:

\begin{equation}\label{eqcan}
  (\Sigma _{1,5,1},~[0,4,0])~~~\, {\rm and}~~~\,
  (\Sigma _{m,1,q},~[q-1,0,m-1])~,~~~\, ~m+q=6~.
  \end{equation}
We give the remaining results in a table in which the first
column contains the sign
pattern, the second the realizable and the third the non-realizable triples
$[u_1,u_2,u_3]$:

$$\begin{array}{lcccc}
  {\rm SP}&&{\rm Y}&&{\rm N}\\ \\
  \Sigma _{2,4,1}&&[0,2,2],~[0,3,1],~[0,4,0]&&{\rm all~other~cases}\\ \\ 
  \Sigma _{3,3,1}&&[1,0,3],~[0,0,4],~[0,1,3],&&{\rm all~other~cases}\\
  &&[0,2,2],~[0,3,1],~[0,4,0]&&\\ \\
  \Sigma_{4,2,1}&&[1,0,3],~[0,0,4],~[0,1,3]&&{\rm all~other~cases}\\
  &&[0,2,2]&&\\ \\
  \Sigma_{2,3,2}&&{\rm all~possible~cases}&&{\rm no~cases}\\ \\
  \Sigma_{3,2,2}&&{\rm all~other~cases}&&[4,0,0],~[3,1,0]\\ &&&&[2,2,0],~[1,3,0]
  \end{array}$$

\subsection{The ratio between the numbers of realizable and all couples}

The number of realizable couples with $d=6$ and $\tilde{c}=3$ can be found
using Theorem~\ref{tmmain} and Remarks~\ref{remsBDG}. It equals

$$5\times 4+6\times 4+15\times 2+4\times 2+1\times 4+1\times 2+1\times 2=90~.$$
These products correspond to the orbits $A$, $C$, $E$, $F$, $B$, $D$ and $G$
respectively. The second factor corresponds to the number of sign patterns
in the given orbit.

At the same time the number of compatible orders of moduli with 3 letters $P$
and 3 letters $N$ equals 20. So the number of all couples with $d=6$ and
$\tilde{c}=3$ equals

$$(4+4+2+2+4+2+2)\times 20=400~.$$
For $\tilde{c}=0$ and $6$, the only couples $K:=(\Sigma_7,NNNNNN)$ and
$i_m(K)$ are realizable. For $\tilde{c}=1$, the numbers of realizable and
of all couples
are (see (\ref{eqc1}))

$$1+3+5+5+3+1=18~~~{\rm and}~~~\, 6\times 6=36~~~\, {\rm respectively~.}$$
The same numbers apply to the case $\tilde{c}=5$ as well (one has to use
the involution~$i_m$). The factor $6$ stands for the number of orders of
moduli with $\tilde{c}=1$ or~$5$.

For $\tilde{c}=2$ and $4$, we use the end of the previous subsection to find
these numbers. The last table shows that there are 4 orbits of sign patterns
of length 4 and 1 of length 2 each
with 15 compatible orders of moduli
(of which half correspond to the case $\tilde{c}=2$ and the other half to
$\tilde{c}=4$). This makes 270 couples. To these one has to add the
canonical sign patterns (see (\ref{eqcan})) which brings another
$6\times 15=90$ couples with
$\tilde{c}=2$ and $90$ with $\tilde{c}=4$. So there are 450 couples
of which 12 are realizable in the case of canonical sign patterns and
$3\times 4+6\times 4+4\times 4+15\times 2+11\times 4=126$ in the other cases.

Thus the ratio between the numbers of realizable and 
of all couples is

$$r(6)=(90+2+36+(12+126))/(400+2+72+450)=19/66~.$$
The numbers $r(d)$, $d\leq 5$, are computed in \cite{KoCMA23}. For $d\leq 6$,
the sequence of numbers $r(d)$ looks like this: $1$, $2/3$, $3/5$, $3/7$,
$47/126$, $19/66$. One could conjecture that this sequence (defined for
$d\in \mathbb{N}^*$) is decreasing. The sequence $r(d+1)/r(d)$, $d=1$,
$\ldots$, $5$, equals

$$2/3=0.66\ldots ,~~\, 9/10=0.9,~~\, 5/7=0.71\ldots ,~~\,
47/54=0.87\ldots ,~~\, 399/517=0.77\ldots ~.$$
It seems that when the ratio $r(d+1)/r(d)$ is defined for $d\in \mathbb{N}^*$,
this gives two adjacent sequences.

\subsection{The methods used in the proof of Theorem~\protect\ref{tmmain}
\protect\label{subsecmethods}}

We use four methods in the proof of Theorem~\ref{tmmain}. Three of them can be
qualified as analytic and the fourth as computational. In the next subsection
we explain how realizability of certain couples for degree $d+1$ hyperbolic
polynomials can be deduced from the realizability of couples for degree~$d$. 
The second method consists in proving that the inequalities between the moduli
of roots do not allow certain coefficients of a hyperbolic polynomial to
have certain signs. In Subsection~\ref{subsecneigh} we describe another 
method used to
prove that certain couples
are not realizable. The method is based on properties of the set $E_d$
of hyperbolic polynomials having a couple of non-zero opposite real roots.

Finally, in order to quickly obtain
examples of realizability, we use a program which generates uniformly
distributed random numbers. For
a given degree $d$, a given sign pattern and a given order of moduli,
the program generates $d$ real numbers
to create roots verifying the moduli order. Then the code calculates
the coefficients of the polynomial
and checks whether they match the sign pattern. If it is the case,
the code stops and returns the result, 
i.~e. the polynomial. If not, it continues and repeats the simulation
until it finds one or stops if the
number of simulations is reached. Finding concrete examples of realizability
when analytic methods fail turns out to be indispensable in the context of
a problem closely related to Question~\ref{quest2}, see~\cite{FoKoSh}.
The problem asks for real, but not necessarily hyperbolic polynomials, 
which triples (sign pattern, number of positive roots,
number of negative roots) compatible with Descartes' rule of signs, are
realizable.

\subsection{Concatenation of couples\protect\label{subsecconcat}}

Consider a hyperbolic degree $d$ polynomial $V$ with distinct moduli of roots
and non-vanishing coefficients. Denote by $\Omega$ the order of the moduli of
its roots, where $\Omega$ is a string of letters $P$ and/or $N$. Then for
$\varepsilon >0$ small enough, the first $d+1$ coefficients of the
degree $d+1$ hyperbolic polynomials
$W_-:=V(x)(x-\varepsilon )$ and $W_+:=V(x)(x+\varepsilon )$ are perturbations of
the respective coefficients of $V$. Hence they are of the same signs
as the latter coefficients. The three polynomials realize the couples

$$V:=(\sigma (V),\Omega )~,~~~\, W_-:(\sigma (W_-),P\Omega )~~~\,
{\rm and}~~~\, W_+:(\sigma (W_+),N\Omega )~.$$
Denote by $\alpha$ the last component of the sign pattern $\sigma (V)$, where
$\alpha =+$ or~$-$. Hence $\sigma (W_-)$ (resp. $\sigma (W_+)$)
is obtained from $\sigma (V)$ by adding to the right the component $-\alpha$
(resp. $\alpha$). We say that the couples $W_-$ and $W_+$ are obtained by
{\em concatenation} of the couple $V$ with the couples $((+,-),P)$ and
$((+,+),N)$ respectively. The method of concatenation is explained in a
broader context in \cite{FoKoSh} and within the framework of the 
problem mentioned at the end of Subsection~\ref{subsecmethods}.

\subsection{The set $E_d$ and neighbours of quadruples
  \protect\label{subsecneigh}}

\begin{nota}\label{notaPi}
  {\rm For a given sign pattern $\sigma$ of length $d+1$, we denote by
    $\Pi _d(\sigma )$
    the set of monic hyperbolic degree $d$ polynomials with distinct roots
    defining the sign
    pattern $\sigma$. For an order of moduli $\Omega$ compatible with $\sigma$,
    we denote by $\Pi _d(\sigma ,\Omega )\subset \Pi _d(\sigma )$ the
    set of monic hyperbolic degree $d$ polynomials defining the sign
    pattern $\sigma$ the order of moduli of their roots being~$\Omega$. We
    denote by $E_d(\sigma )$ the subset of $\Pi _d(\sigma )$ on which a
  positive and a negative root have equal moduli.}
\end{nota}

It is proved in \cite[Theorem~2]{KoAnn} that all sets of the form
$\Pi _d(\sigma )$ are open and contractible. It is shown in
\cite[Theorem~1.5]{GaKoTa} that at a generic point the set $E_d(\sigma )$
is locally a smooth hypersurface; at a point, where there are $s$ distinct
couples (positive root, negative root) of equal modulus, $E_d(\sigma )$ is
the transversal intersection of $s$ smooth hypersurfaces.

\begin{defi}\label{defineighbour}
  {\rm Two quadruples $[u_1,u_2,u_3,u_4]$ are {\em neighbours} if they are
    obtained from one another by transferring a unit one position to the left
    or right. E.~g. all neighbours of $[0,2,0,1]$ are $[1,1,0,1]$,
    $[0,1,1,1]$ and $[0,2,1,0]$.} 
  \end{defi}

\begin{prop}\label{propneighbours}
  Suppose that for given degree $d$ and sign pattern $\sigma$, two couples
  $(\sigma ,\Omega _1)$ and $(\sigma ,\Omega _2)$ are realizable, with
  $\Omega _1\neq \Omega _2$. Then there
  is a continuous path connecting two points $A_i\in \Pi _d(\sigma ,\Omega _i)$,
  $i=1$, $2$, passing through a point $A'\in \Pi _d(\sigma ,\Omega ')$, where
  $\Omega '$ is a neighbour of $\Omega _1$.
  \end{prop}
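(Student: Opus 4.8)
The plan is to exploit the path-connectedness of $\Pi_d(\sigma)$ together with the geometry of the exceptional set $E_d(\sigma)$, realizing the change from $\Omega_1$ to $\Omega_2$ as a succession of transversal crossings of $E_d(\sigma)$, the very first of which already produces a neighbour of $\Omega_1$. First I would note that by \cite[Theorem~2]{KoAnn} the set $\Pi_d(\sigma)$ is contractible, hence path-connected. Both couples being realizable, the open sets $\Pi_d(\sigma,\Omega_1)$ and $\Pi_d(\sigma,\Omega_2)$ are non-empty; I pick $A_1\in\Pi_d(\sigma,\Omega_1)$, $A_2\in\Pi_d(\sigma,\Omega_2)$ and join them by a continuous path $\gamma\colon[0,1]\to\Pi_d(\sigma)$, $\gamma(0)=A_1$, $\gamma(1)=A_2$.

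Next I would isolate where the order of moduli can change. On $\Pi_d(\sigma)\setminus E_d(\sigma)$ all $d$ moduli are pairwise distinct, since two distinct roots can share a modulus only if one is positive and the other negative, which is precisely the locus $E_d(\sigma)$. As the roots depend continuously on the point of $\Pi_d(\sigma)$, the order of moduli is locally constant on $\Pi_d(\sigma)\setminus E_d(\sigma)$, hence constant on each connected component. Because $\Omega_1\neq\Omega_2$, the path $\gamma$ must therefore meet $E_d(\sigma)$.

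Then I would invoke \cite[Theorem~1.5]{GaKoTa}: $E_d(\sigma)$ is a smooth hypersurface away from the strata where $s\geq 2$ couples (positive root, negative root) have equal modulus, and such a stratum is a transversal intersection of $s$ hypersurfaces, so of codimension $s\geq 2$. By Thom's transversality theorem, a generic small perturbation of $\gamma$ with fixed endpoints (which lie off $E_d(\sigma)$, in the open cells $\Pi_d(\sigma,\Omega_i)$) stays in $\Pi_d(\sigma)$, avoids all strata with $s\geq 2$, and meets the smooth part of $E_d(\sigma)$ transversally in finitely many interior points. Let $t^\ast$ be the smallest parameter at which $\gamma$ meets $E_d(\sigma)$. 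For $t<t^\ast$ the order of moduli equals $\Omega_1$. At $t^\ast$ exactly one couple (positive root, negative root) attains equal modulus; these two moduli are adjacent in the ordering, so for $\delta>0$ small no other modulus lies between them, and as $t$ passes $t^\ast$ they exchange their relative position. Thus the order of moduli at $A'=\gamma(t^\ast+\delta)$ is obtained from $\Omega_1$ by interchanging one adjacent pair of letters $P$ and $N$, i.e. by transferring a unit one position in the quadruple $[u_1,u_2,u_3,u_4]$; this is a neighbour $\Omega'$ of $\Omega_1$ in the sense of Definition~\ref{defineighbour}. Hence $A'\in\Pi_d(\sigma,\Omega')$ and the (perturbed) $\gamma$ is the required path.

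I expect the transversality step to be the main obstacle: one must guarantee that the path can be perturbed to dodge the higher-codimension strata of $E_d(\sigma)$ while remaining in the open set $\Pi_d(\sigma)$ and fixing the endpoints in their prescribed cells, and that the local normal form of a transversal crossing of a single smooth sheet really amounts to the exchange of one adjacent positive-negative pair of moduli. The codimension bound $s\geq 2$ from \cite[Theorem~1.5]{GaKoTa} is exactly what permits the generic avoidance, and the adjacency of the two coinciding moduli at $t^\ast$ is what identifies the resulting order change with a neighbour transition.
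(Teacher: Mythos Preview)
Your argument is correct and follows exactly the same route as the paper's own proof: use the path-connectedness of $\Pi_d(\sigma)$ (the paper leaves this implicit, you cite \cite{KoAnn} explicitly), perturb the path to avoid the codimension~$\geq 2$ strata of $E_d(\sigma)$ via \cite[Theorem~1.5]{GaKoTa}, and read off the neighbour $\Omega'$ at the first crossing. Your version simply spells out the details (local constancy of the order off $E_d(\sigma)$, transversality, the adjacent $P$/$N$ swap) that the paper compresses into three sentences.
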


\begin{proof}
  Indeed, one can assume that the path $\gamma$ is smooth and avoids
  the non-generic
  points of $E_d(\sigma )$, i.~e. the points 
  at which there is more than one pair of opposite real non-zero roots, see
  \cite[Theorem~1.5]{GaKoTa}.
  On the other hand, as $\Omega _2\neq \Omega _1$, the path $\gamma$
  intersects $E_d(\sigma )$. The first time when this occurs the path
  passes from $\Pi _d(\sigma ,\Omega _1)$ into $\Pi _d(\sigma ,\Omega ')$,
  where $\Omega '$ is a neighbour of $\Omega_1$.  
\end{proof}

\begin{rem}\label{remneighbours}
  {\rm As the path from the proof of Proposition~\ref{propneighbours} avoids
    the non-generic points of $E_d$, when it 
    intersects the common border of the sets $\Pi _d(\sigma ,\Omega _1)$
    and $\Pi _d(\sigma ,\Omega ')$, this point corresponds to a polynomial
    having two opposite real roots (and this is the only equality between
    moduli of its roots). After a linear change of the variable $x$ this
    polynomial can be given the form $Q=(x^2-1)R$, where $R$ is a degree $d-2$
    monic hyperbolic polynomial.}
  \end{rem}

\section{Proof of Theorem~\protect\ref{tmmain}\protect\label{secprtmmain}}

\begin{proof}[Part~(1)]
  The couple $(\Sigma _{3,1,2,1},~PNPPNN)$ is realizable, because $PNPPNN$ is
  the canonical order of moduli (see \cite[Proposition~1]{KoSe}).
  We prove by examples the realizability of 
  the remaining 4 couples mentioned in
  part (1) of the theorem. Our examples involve polynomials having a positive
  and a negative root of equal moduli. After perturbing these roots so that
  they become of distinct moduli (the perturbation does not change the signs
  of the coefficients) one obtains polynomials realizing the given order
  of moduli with the sign pattern $\Sigma _{3,1,2,1}$. For the first
  polynomial of the list below the perturbed roots equal
  $-9$, $-1.01$, $-1-\varepsilon$, $0.39$, $0.4$ and $1-\varepsilon$,
  $0<\varepsilon \ll 0.1$. 

  $$\begin{array}{ll}PPPNNN &(x-0.39)(x-0.4)(x-1)(x+1)(x+1.01)(x+9)\\
&=x^6+8.23x^5+0.1902x^4-13.26928x^3\\ &+0.08276x^2+5.03928x-1.27296\\ \\
    PPNPNN &(x-0.2)(x-1)(x+1)(x-3.1)(x+5)(x+10)=\\
    &=x^6+11.7x^5+0.12x^4-167.4x^3+29.88x^2+155.7x-31\\ \\
    PPNNPN~~&(x-0.39)(x-0.4)(x+0.99)(x+1)(x-1)(x+9)\\
    &=x^6+9.2x^5+0.1739x^4-14.68046x^3\\ &+0.21606x^2+5.48046x-1.38996\\ \\
    NPPPNN~~&(x+1)(x-1)(x-2)(x-2.1)(x+5)(x+20)\\
    &=x^6+20.9x^5+0.7x^4-325.9x^3+418.3x^2+305x-420\end{array}$$
    Now we prove the non-realizability of the rest of the orders of moduli
    with the sign pattern $\Sigma _{3,1,2,1}$. Part of the results concern also
    the sign pattern $\Sigma _{3,2,1,1}$. 

    \begin{prop}\label{propu4=0}
      The $10$ orders of moduli with $u_4=0$ are not realizable with any of
      the sign patterns $\Sigma _{3,1,2,1}$ or $\Sigma _{3,2,1,1}$.
    \end{prop}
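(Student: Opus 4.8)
The plan is to show that for either sign pattern $\Sigma_{3,1,2,1}$ or $\Sigma_{3,2,1,1}$, the condition $u_4=0$ (no modulus of a negative root exceeding $\alpha_3$, the largest positive root) forces a sign contradiction in one of the coefficients. Recall that $u_4=0$ means the largest root in modulus is positive; equivalently, all three negative roots $-\gamma_1,-\gamma_2,-\gamma_3$ satisfy $\gamma_i<\alpha_3$. This is precisely the ``second method'' described in Subsection~\ref{subsecmethods}: use the inequalities between moduli to rule out certain signs. First I would write $Q=(x-\alpha_1)(x-\alpha_2)(x-\alpha_3)(x+\gamma_1)(x+\gamma_2)(x+\gamma_3)$ with all $\alpha_i,\gamma_j>0$, and factor out the two largest roots in modulus: since $u_4=0$, the largest modulus is $\alpha_3$, and I would pair it with the second-largest modulus. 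The key structural observation is that $\Sigma_{3,1,2,1}=(+,+,+,-,+,+,-)$ and $\Sigma_{3,2,1,1}=(+,+,+,-,-,+,-)$ share the prefix $(+,+,+,-)$, so both demand $q_6,q_5,q_4>0$ but $q_3<0$, where $q_6=1$.

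The decisive step is to examine the leading coefficients, which are elementary symmetric functions of the roots with signs. Writing $\beta_1>\beta_2>\cdots>\beta_6$ for the six moduli in decreasing order (so $\beta_1=\alpha_3$ since $u_4=0$), each coefficient $q_{6-k}=(-1)^{?}e_k$ is, up to sign, the $k$th elementary symmetric polynomial in the \emph{signed} roots. The approach I would take is to show that the requirement $q_4>0$ and $q_3<0$ together with $u_4=0$ is incompatible. Concretely, I would use the standard trick of grouping the largest modulus $\alpha_3$ against the rest: write $Q=(x-\alpha_3)\,\tilde Q(x)$ where $\tilde Q$ is the degree-$5$ hyperbolic polynomial on the remaining five roots, all of modulus $<\alpha_3$. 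Then the coefficient relations $q_j=\tilde q_{j-1}-\alpha_3\,\tilde q_j$ let me track how the sign pattern of $Q$ constrains the sign pattern of $\tilde Q$; since $\alpha_3$ dominates every other modulus, the terms $-\alpha_3\tilde q_j$ control the signs, and I expect this to force $\tilde Q$ into a sign pattern with $\tilde c\le 1$ among its top coefficients, contradicting that $\tilde Q$ must still carry two more positive roots.

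The cleanest route, which I would pursue in parallel, is to apply the concatenation and neighbour machinery of Subsection~\ref{subsecneigh} in reverse. Since the $10$ quadruples $[u_1,u_2,u_3,0]$ are exactly those realizable orders of moduli ending in $N$ whose final letter is forced by a negative root of non-maximal modulus, I would invoke Remark~\ref{remneighbours}: any realizable couple adjacent to these would, at the boundary $E_d(\sigma)$, yield a factorization $Q=(x^2-1)R$ with $R$ a degree-$4$ monic hyperbolic polynomial, and I would check that no admissible $R$ reproduces the prefix $(+,+,+,-)$ after multiplication by $(x^2-1)$. The main obstacle I anticipate is purely bookkeeping: there are $10$ quadruples and two sign patterns, and while the leading-coefficient argument should dispatch all of them uniformly via the shared prefix $(+,+,+,-)$, I must verify that the contradiction genuinely uses only the \emph{top} few coefficients $q_6,\dots,q_3$ and the single hypothesis $u_4=0$, rather than the finer distribution of $u_1,u_2,u_3$. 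If the contradiction localizes to $q_4$ and $q_3$ as I expect, the proof collapses to a single inequality argument independent of which of the $10$ quadruples is considered, and independent of which of the two sign patterns is in play, since they agree on exactly these coefficients.
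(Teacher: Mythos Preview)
Your proposal sketches two routes but neither is carried to a point where the proof would close, and a couple of the guiding claims are off.

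First, a slip: $u_4=0$ means no $\gamma_i$ exceeds $\alpha_3$, so the order of moduli \emph{ends in $P$}, not in $N$. More seriously, the first route---factor $Q=(x-\alpha_3)\tilde Q$ and argue that the terms $-\alpha_3\tilde q_j$ ``control the signs''---does not work as stated: the relation $q_j=\tilde q_{j-1}-\alpha_3\tilde q_j$ gives no a~priori sign information unless you already bound $\tilde q_{j-1}$ against $\alpha_3\tilde q_j$, and $u_4=0$ alone does not supply that. Your hope that the contradiction localizes to $q_4$ and $q_3$ is also misplaced: in the paper's argument $q_3$ plays no role, whereas $q_0$ and $q_1$ are essential for pinning down the sign pattern of the reduced polynomial.

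Your second route is close to what the paper does, but it is missing the two steps that make the argument go through. The neighbour/path machinery needs a \emph{known realizable target} with $u_4\geq 1$ (the paper uses $(\sigma,PPNNPN)$); then any path from a hypothetical $u_4=0$ realization to this target must, at its first crossing of $E_6(\sigma)$, pass through a polynomial whose \emph{largest} positive and negative moduli coincide. After rescaling this is $Q=(x^2-1)R$ with every root of $R$ of modulus $<1$---a constraint you never isolate but which is what ultimately produces the contradiction. One then shows (Lemma~\ref{lmQR}) from the signs of $q_0,q_1,q_4,q_5$ that $R$ necessarily carries the sign pattern $\Sigma_{3,1,1}$; since this pattern is canonical the roots of $R$ satisfy $0<a<b<f<g$, and only then can one compute $q_4=(fg-1)+a(b-f)-ag-bf-bg$ and see it is negative because $g<1$. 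Your phrase ``check that no admissible $R$ reproduces the prefix $(+,+,+,-)$'' hides all of this: the determination of $\sigma(R)$, the appeal to canonicality, and the use of the modulus bound $g<1$ are each indispensable, and none appears in the proposal.
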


    \begin{proof}
      In the proof $\sigma$ denotes any of the sign patterns
      $\Sigma _{3,1,2,1}$ or $\Sigma _{3,2,1,1}$. The couples $(\sigma ,PPNNPN)$
      are realizable, see the examples at the beginning of the proofs of
      parts (1) and (4) of the theorem. For these couples one has $u_4=1$. 
      This implies that the set $\Pi _6(\sigma ,PPNNPN)$
      is open and non-empty, see Notation~\ref{notaPi}.
      Denote by $\Omega$ an order of moduli
      compatible with the sign pattern $\sigma$ and with $u_4=0$.
      If the set $\Pi _6(\sigma ,\Omega )$ is non-empty, then there
      exists a continuous path $\gamma \subset \Pi _6(\sigma )$
      leading from a point of $\Pi _6(\sigma ,\Omega )$ to a point
      of $\Pi _6(\sigma ,PPNNPN)$ (see Proposition~\ref{propneighbours}
      and its proof).
      Hence this path contains a
      polynomial $Q$ defining the sign pattern $\sigma$ and having
      a positive and a negative root of equal modulus. Moreover, its other
      roots are of smaller modulus.

      Set $Q:=x^6+\sum_{j=1}^5q_jx^j$. After a linear
change of the variable $x$, $Q$ takes the form $Q:=(x^2-1)R$,
where $R:=x^4+\sum_{j=0}^3a_jx^j$ is a degree $4$ hyperbolic polynomial
the moduli of whose roots are $<1$.

    \begin{lm}\label{lmQR}
      Suppose that the polynomials $Q:=x^6+\sum_{j=1}^5q_jx^j$ and
      $R:=x^4+\sum_{j=0}^3a_jx^j$ are hyperbolic, $Q:=(x^2-1)R$ and $Q$
      defines one of the
      sign patterns $\Sigma_{3,1,2,1}$ and $\Sigma_{3,2,1,1}$. Then $R$
      defines the sign pattern $\Sigma_{3,1,1}$.
\end{lm}

 \begin{proof}
   It is clear that

   $$Q=x^6+a_3x^5+(a_2-1)x^4+(a_1-a_3)x^3+(a_0-a_2)x^2-a_1x-a_0~.$$
   If $Q$ defines the sign pattern $\Sigma_{3,1,2,1}$ or $\Sigma_{3,2,1,1}$, then
   $a_0>0$, $a_1<0$ and $a_3>0$. If $a_2\leq 0$, then $a_2-1<0$ which
   contradicts each of the two sign patterns, so one must have $a_2>0$
   and $R$ defines the sign pattern $\Sigma_{3,1,1}$.
 \end{proof}

 The sign pattern $\Sigma _{3,1,1}$ is canonical. Hence the order of moduli
 defined by the roots of $R$ is $PPNN$. We denote these roots by $0<a<b$ and
 $-g<-f<0$, where $a<b<f<g$. Thus

 $$q_4=ab-af-ag-bf-bg+fg-1~.$$
 If $g\leq 1$, then $q_4=(fg-1)+a(b-f)-ag-bf-bg<0$ which is a contradiction. 
Thus all orders of moduli with $u_4=0$ are not realizable.

    \end{proof}

    We give the proof of non-realizability of the remaining $5$ couples.
    The couples $(\Sigma _{3,1,2,1},PNPNPN)$ and $(\Sigma _{3,2,1,1},PNPNPN)$
    (the order of moduli corresponds to the quadruple
    $[0,1,1,1]$) are not
    realizable, because the order of moduli $PNPNPN$ is rigid and hence
    realizable only with the sign pattern $\Sigma _{2,2,2,1}$, see
    Definition~\ref{defirigid} and the lines after it.

    Suppose that the order of moduli $[1,1,0,1]$ is realizable with the sign
    pattern $\Sigma _{3,1,2,1}$ or $\Sigma _{3,2,1,1}$, meaning
    that the following inequalities are satisfied: 

    $$\gamma_1<\alpha_1<\gamma_2<\alpha_2<\alpha_3<\gamma_3~.$$
    In this case, we have

\begin{equation}\label{eq1}\begin{array}{ccl}
q_4&=&(\alpha_1\alpha_2-\alpha_2\gamma_2)+(\alpha_1\alpha_3-\alpha_1\gamma_3)+
  (\alpha_2\alpha_3-\alpha_2\gamma_3) +(\gamma_1\gamma_2-\alpha_1\gamma_2)\\ &&+
(\gamma_1\gamma_3-\alpha_3\gamma_3)+(\gamma_2\gamma_3-\alpha_3\gamma_2)-
\alpha_1\gamma_1-\alpha_2\gamma_1-\alpha_3\gamma_1~.
\end{array}\end{equation}

    However, this is a sum of negative terms, which leads to a contradiction.

    Suppose  that the order of the moduli $[1,0,1,1]$ is realizable with
    the sign pattern $\Sigma _{3,1,2,1}$ or $\Sigma _{3,2,1,1}$, which
    means that the following inequalities are satisfied: 

$$ \gamma_1<\alpha_1<\alpha_2<\gamma_2<\alpha_3<\gamma_3.$$

 In this case, we obtain the following expression for $q_4$: 

 \begin{equation}\label{eq2}
   \begin{array}{ccl}

   q_4&=&(\alpha_1\alpha_2-\alpha_2\gamma_2)+(\alpha_1\alpha_3-\alpha_2\gamma_3)
   +(\alpha_2\alpha_3-\alpha_3\gamma_2) +(\gamma_1\gamma_2-\alpha_1\gamma_2)\\ 
   &&+(\gamma_1\gamma_3-\alpha_1\gamma_3)+(\gamma_2\gamma_3-\alpha_3\gamma_3)-
\alpha_1\gamma_1-\alpha_2\gamma_1-\alpha_3\gamma_1~.

\end{array}\end{equation}
 It is clear that $q_4$ is a sum of negative quantities,
 which is a contradiction.

 The orders of moduli $[2,0,0,1]$ and $[0,2,0,1]$ are not realizable with the
 sign pattern $\Sigma _{3,1,2,1}$, because neither of their neighbours is,
 see Proposition~\ref{propneighbours}. For $[2,0,0,1]$ these neighbours are
 $[1,1,0,1]$ and $[2,0,1,0]$. For
 $[0,2,0,1]$ they are $[1,1,0,1]$, $[0,1,1,1]$ and $[0,2,1,0]$. 
    
  \end{proof}

\begin{proof}[Part~(2)]
  The couple $(\Sigma_{2,1,2,2},NPNPPN)$ is realizable, because $NPNPPN$ is the
  canonical order. The other 5 couples mentioned in part (2) of the theorem
  are also realizable:

  $$\begin{array}{ll}
PNNPPN~~~&(x-4.52)(x+5.02)(x+5.32)(x-7.002)(x-8.003)(x+9.32)\\
&=x^6+0.135x^5-136.926694x^4+27.6529548x^3\\
&+5404.574382x^2-344.273285x-63044.12478\\ \\
NPPPNN&(x+2.5)(x-4.95)(x-6.47)(x-8.19)(x+8.57)(x+9.05)\\
&=x^6+0.51x^5-147.3884x^4+73.049286x^3\\
&+6188.991502x^2-7552.653247x-50858.41147\\ \\
NPPNPN&(x+1.49)(x-1.87)(x-5.77)(x+5.96)(x-7.58)(x+8.07)\\
&=x^6+0.3x^5-98.5114x^4+5.90954x^3\\
&+2380.426651x^2-720.0363792x-5861.282963\\ \\
NPPNNP&(x+1.34)(x-3.43)(x-5.34)(x+7.86)(x+9)(x-9.4)\\
&=x^6+0.03x^5-136.6074x^4+60.496052x^3\\
&+4547.732428x^2-6518.600281x-16320.4859\\ \\
NNPPPN&(x+2.5)(x+3.03)(x-4.28)(x-4.4)(x-5.6)(x+9.4)\\
&x^6+0.65x^5-86.2034x^4+122.15104x^3\\
&+1425.210824x^2-1478.768374x-7509.222336
    \end{array}$$
  We prove that the remaining 14 cases are not realizable.
  For the first 8 of them we suppose that they are realizable by a polynomial
  $Q:=x^6+\sum_{j=0}^5q_jx^j$. There are 4 cases in which one obtains that

  $$q_5:=(\gamma _1-\alpha _1)+(\gamma _2-\alpha _2)+(\gamma _3-\alpha _3)<0$$
  which contradicts the sign
  pattern. These are

$$\begin{array}{lcl}
\gamma _1<\gamma _2<\gamma _3<\alpha _1<\alpha_2<\alpha _3&:&[3,0,0,0] \\ 
\gamma _1<\gamma _2<\alpha _1<\gamma _3<\alpha _2<\alpha _3&:&[2,1,0,0] \\  
\gamma _1<\alpha _1<\gamma _2<\gamma _3<\alpha _2<\alpha _3&:&[1,2,0,0]\\ 
\gamma _1<\gamma _2<\alpha _1<\alpha _2<\gamma _3<\alpha _3&:&[2,0,1,0] 
  \end{array}$$
  There are 4 cases in which

  $$q_1:=\alpha _1\alpha _2\alpha _3\gamma_1\gamma_2\gamma_3(1/\alpha_1+
  1/\alpha_2+1/\alpha_3-1/\gamma_1-1/\gamma_2-1/\gamma_3)>0$$
  which also contradicts the sign pattern. The cases are:

  $$\begin{array}{lcl}
    \alpha_1<\alpha_2<\alpha_3<\gamma_1<\gamma_2<\gamma_3&:&[0,0,0,3] \\ 
    \alpha_1<\gamma_1<\alpha_2<\alpha_3<\gamma_2<\gamma_3&:&[0,1,0,2] \\ 
    \alpha_1<\alpha_2<\gamma_1<\alpha_3<\gamma_2<\gamma_3&:&[0,0,1,2] \\ 
    \alpha_1<\alpha_2<\gamma_1<\gamma_2<\alpha_3<\gamma_3&:&[0,0,2,1]
    \end{array}$$
The orders of moduli $[0,1,1,1]$ and $[1,1,1,0]$, i.~e. $PNPNPN$ and
$NPNPNP$ are rigid, see Definition~\ref{defirigid} and the lines after it,
hence non-realizable with the sign pattern
$\Sigma _{2,1,2,2}$.

In the four remaining cases of orders of moduli there exists at least one
realizable
neighbour and one can apply Proposition~\ref{propneighbours} and
Remark~\ref{remneighbours}. We list these cases to the left and their
neighbours to the right; non-realizable neighbours are marked by the index~0:

$$\begin{array}{rlcllll}
  1)&[0,0,3,0]&\hspace{1cm}&[0,1,2,0]~,&[0,0,2,1]_0&\\ \\
  2)&[0,1,2,0]&&[1,0,2,0]~,&[0,2,1,0]~,&[0,1,1,1]_0~,&[0,0,3,0]\\ \\
  3)&[0,2,1,0]&&[1,1,1,0]_0~,&[0,1,2,0]~,&[0,2,0,1]~,&[0,3,0,0]\\ \\
  4)&[0,3,0,0]&&[1,2,0,0]_0~,&[0,2,1,0]
  \end{array}$$
The four cases and their neighbours realizable with $\Sigma _{2,1,2,2}$
are all with $u_4=0$ or $u_4=1$.
Hence if one
applies Proposition~\ref{propneighbours} and Remark~\ref{remneighbours}, one
concludes that realizability of one of the cases 1)~--~4) implies the
existence of a polynomial $Q=(x^2-1)R$, where all roots of $R$ are of modulus
$<1$. Consider the orders of moduli $[0,0,3,0]$ and $[0,1,2,0]$. For the roots
$-g<-f<0<a<b$ of $R$ one should have

$$a<b<f<g<1~~~\, {\rm or}~~~\, a<f<b<g<1~~~\, {\rm respectively}~.$$
However, this would imply $q_1=abcd((1/a-1/f)+(1/b-1/g))>0$, which is a
contradiction. So the orders $[0,0,3,0]$ and $[0,1,2,0]$ are not realizable. 

Set $\sigma :=\Sigma _{2,1,2,2}$. Consider the set
$S:=S_1\cup S_2$, $S_1:=\Pi _6(\sigma ,[0,2,1,0])$,
$S_2:=\Pi _6(\sigma ,[0,3,0,0])$, see Notation~\ref{notaPi}. If at least one of
the set $S_1$ and $S_2$ is non-empty, then there exists a smooth path
$\gamma \subset \Pi _6(\sigma )$ connecting a point of $S$ with a point of
$\Pi _6(\sigma ,[0,2,0,1])$. One can choose the path avoiding the
non-generic points of the set $E_6$ and intersecting this set transversally.
Hence there exists a point of $\gamma$ belonging to the common border of
$S$ and $\Pi _6(\sigma ,[0,2,0,1])$. After a linear change of the variable $x$,
this point corresponds to a polynomial $Q=(x^2-1)R$, where for the roots of $R$
one has $a<f<g<b<1$ and 

$$\begin{array}{rrr}
  q_5:=-a-b+f+g>0~~~\, {\rm and}&q_1:=abfg(1/a+1/b-1/f-1/g)<0~,&
  {\rm i.~e.}\\ \\
a+b<f+g~~~\, {\rm and}&(a+b)/ab<(f+g)/fg~.
\end{array}$$
This, however, is impossible. Indeed, for fixed $f$, $g$ and $a+b$,
the quantity $(a+b)/ab$ is the minimal possible when $a$ and $b$ are closest
to one another. But for $b=g$, one should have $a<f$ and $1/a<1/f$ which
is contradictory; for $a=f$, this gives $b<g$ which is also a contradiction.
Hence $S=\emptyset$, i.~e. the orders of moduli $[0,2,1,0]$ and
$[0,3,0,0]$ are not realizable with~$\Sigma _{2,1,2,2}$.
\end{proof}

\begin{proof}[Part (3)]
  We prove that certain couples are realizable by concatenating couples
  corresponding to $d=5$ with ones corresponding to $d=1$, see
  Subsection~\ref{subsecconcat}. It is shown in
  \cite[Section~3]{KoCMA23} that for $d=5$, the sign pattern
  $\Sigma_{2,2,2}$ is realizable
  with all compatible orders $\Omega$
  ($\Omega$ is any string of 2 letters $P$ and 3 letters $N$). Applying the
  involution $i_m$ (see Definition~\ref{defiZ2Z2}) one sees that the sign
  pattern $i_m(\Sigma_{2,2,2})=\Sigma_{1,2,2,1}$ is realizable with any order
  $\Omega '$ which is a string of $3$ letters $P$ and $2$ letters $N$.
  
  Denote by $T$ (resp. $U$) a polynomial
  realizing the couple $(\Sigma_{2,2,2},\Omega )$ (resp.
  $(\Sigma_{1,2,2,1},\Omega ')$). Hence for $\varepsilon >0$
  small enough, the product $T(x)(x-\varepsilon )$ (resp.
  $U(x)(x+\varepsilon )$) realizes the order
  $P\Omega$ with the sign pattern $\Sigma_{2,2,2,1}$ (resp. the order $N\Omega '$
  with the sign pattern $\Sigma _{1,2,2,2}$), see Subsection~\ref{subsecconcat}.
  Applying the involution
  $i_r$ (see Definition~\ref{defiZ2Z2}) one understands that any order
  of the form $\Omega 'N$ is realizable with the sign pattern
  $\Sigma_{2,2,2,1}$.

  There are exactly $6$ orders which are not of the form $P\Omega$ or
  $\Omega 'N$. These are the $5$ orders mentioned in part (3) of the theorem
  and the order $NPPNNP$. The latter is realizable with the sign pattern
  $\Sigma _{2,2,2,1}$:

  $$\begin{array}{l}(x+4)(x-5)(x-6)(x+8.74)(x+9.41)(x-9.59)=\\ \\ 
    x^6+1.56x^5-165.7351x^4-145.848506x^3+\\ \\
    7833.610842x^2+24.186884x-94645.70472~.\end{array}$$

    The order of moduli $NPNPNP$ is rigid (see Definition~\ref{defirigid}),
    so realizable only with the sign pattern $\Sigma _{1,2,2,2}$.

    We prove the non-realizability of the remaining 5 orders. Consider a
    degree $6$ hyperbolic polynomial $Q:=x^6+\sum_{j=1}^{5}q_j x^j$ with
    distinct moduli of roots $\alpha _i$ and $\gamma _j$ and defining 
    the sign pattern $\Sigma_{2,2,2,1}$. Thus 

 $$Q=\prod_{i=1}^3(x-\alpha_i)(x+\gamma_i)~~~\, {\rm and}~~~\, 
q_1=\alpha_1\alpha_2\alpha_3\gamma_1\gamma_2\gamma_3S_1,$$
where 

\begin{equation*}
S_1:=(1/\alpha_{3}-1/\gamma_{3})+(1/\alpha_{2}-1/\gamma_{2})
+(1/\alpha_{1}-1/\gamma_{1})
\end{equation*}
If the orders of moduli $[1,2,0,0]$, $[2,0,1,0]$, $[2,1,0,0]$, and
$[3,0,0,0]$ are realizable, then the moduli of the roots satisfy respectively
the following inequalities:

$$\begin{array}{lll} \gamma_1<\alpha_1<\gamma_2<\gamma_3<\alpha_2<\alpha_3~,&

  &\gamma_1<\gamma_2<\alpha_1<\alpha_2<\gamma_3<\alpha_3~, \\
  \gamma_1<\gamma_2<\alpha_1<\gamma_3<\alpha_2<

\alpha_3&\rm{and}&\gamma_1<\gamma_2<\gamma_3<\alpha_1<\alpha_2<\alpha_3~.\\ 

\end{array}$$
Thus $S_1<0$. Therefore, we have $q_1<0$, which leads to a contradiction.

\end{proof}

\begin{proof}[Part (4)]
  The order of moduli $PPNPNN$ is the canonical order for the sign pattern
  $\Sigma _{3,2,1,1}$, so the corresponding couple is realizable, see
  \cite[Proposition~1]{KoSe}. The remaining 3 couples are realizable by
  perturbations of the
  following polynomials (see the beginning of the proof of part (1) of the
  theorem with the explanation about perturbations):

  $$\begin{array}{ll}
PPPNNN~~&(x-0.039)(x-0.4)(x-1)(x+1)(x+1.001)(x+4)\\
&=x^6+4.562x^5+0.824161x^4-6.2417404x^3\\
&-1.7616986x^2+1.6797404x-0.0624624\\ \\
PPNNPN&(x-0.09)(x-0.19)(x+0.8)(x+1)(x-1)(x+13)\\
&=x^6+13.52x^5+5.5531x^4-16.19602x^3\\ &-6.37526x^2+2.67602x-0.17784\\ \\
PNPPNN&(x-0.02)(x+1)(x-1)(x-3.1)(x+5)(x+20)\\
&=x^6+21.88x^5+21.062x^4-332.33x^3\\ &-15.862x^2+310.45x-6.2
    \end{array}$$
It was mentioned already (see Proposition~\ref{propu4=0}) that all orders of
moduli with $u_4=0$ are not realizable with the sign pattern
$\Sigma _{3,2,1,1}$. Also in the proof of part (1) we saw that the sign pattern
$\Sigma _{3,2,1,1}$ is not realizable with any of the orders of moduli
$[1,0,1,1]$ or $[1,1,0,1]$.

Non-realizability of the couple $(\Sigma _{3,2,1,1},PNPNPN)$ was proved
in the proof of part (1). The orders $[2,0,0,1]$ and $[0,2,0,1]$ are also
not realizable with $\Sigma _{3,2,1,1}$, because their respective neighbours
$[1,1,0,1]$, $[2,0,1,0]$ and $[1,1,0,1]$, $[0,1,1,1]$, $[0,2,1,0]$ are not,
see Proposition~\ref{propneighbours}.

It remains to prove the non-realizability of the couple
$(\Sigma _{3,2,1,1},NPPPNN)$. It corresponds to the quadruple $[1,0,0,2]$ and
has exactly two neighbours: $[0,1,0,2]$ and $[1,0,1,1]$ only the first of
which is realizable. Proposition~\ref{propneighbours} and
Remark~\ref{remneighbours} imply that if the couple is realizable, then there
exists a polynomial $Q:=(x^2-1)R$ such that for the roots
$a$, $b$, $-f$ and $-g$ of $R$ one has

$$0<1<a<b<f<g~.$$
Using the same notation as in the proof of part (1)
we observe that

$$q_2=(ab-1)fg+a(f-b)+ag+bf+bg>0$$
whereas one should have $q_2<0$. This contradiction implies
the non-realizability of the couple.
\end{proof}


\begin{thebibliography}{Dillo 83}



\bibitem{Ca} F.~Cajori, A history of the arithmetical methods 
of approximation
to the roots of numerical equations of one unknown quantit



  \bibitem{Cu} D.~R.~Curtiss, Recent extensions of Descartes' rule of signs,
    Annals of Mathematics. 19 (4), 251-278 (1918).

  \bibitem{DG} J.-P.~de Gua de Malves, D\'emonstrations de la R\`egle
  de Descartes,
Pour conno\^{\i}tre le nombre des Racines positives \& n\'egatives dans
les \'Equations qui n’ont point de Racines imaginaires, Memoires de
Math\'ematique et de Physique tir\'es des registres de l’Acad\'emie Royale
des Sciences 72-96 (1741).

\bibitem{Des}
  The Geometry of Ren\'e Descartes with a facsimile of the first edition,
  translated by D. E. Smith and M.L. Latham, New York, Dover Publications,
  1954. 

\bibitem{FoKoSh} J.~Forsg\aa rd, V.~P.~Kostov and B.~Shapiro:
Could Ren\'e Descartes have known this?  Exp. Math. 24 (4)
(2015),
438-448. Zbl 1326.26027, MR3383475


\bibitem{Fo} J.~Fourier, Sur l'usage du th\'eor\`eme de Descartes
 dans la recherche des limites des racines. Bulletin des sciences
 par la Soci\'et\'e philomatique de Paris (1820) 156--165, 181--187;
 {\oe}uvres 2,  291--309, Gauthier-Villars, 1890.

\bibitem{GaKoTa} Y.~Gati, V.~P.~Kostov and M.~C.~Tarchi, Sign patterns
  and rigid moduli orders, The Graduate Journal of Mathematics, Volume 6,
  Issue 1 (2021), 60-72
 
\bibitem{Ga} C.~F.~Gauss, Beweis eines algebraischen Lehrsatzes. 
J. Reine Angew. Math. 3, 1-4 (1828); Werke 3, 67--70, 
G\"ottingen, 1866.




\bibitem{J} J.~L.~W.~Jensen, Recherches sur la th\'eorie des \'equations,
  Acta Mathematica 36, 181-195 (1913).






\bibitem{KoPuMaDe} V.~P.~Kostov, Descartes' rule of signs and moduli of roots,
 Publicationes Mathematicae Debrecen 96/1-2 (2020) 161-184,

\bibitem{KoSe} V.~P.~Kostov, Hyperbolic polynomials and canonical sign
  patterns, Serdica Math. J.  46 (2020) 135-150, arXiv:2006.14458.

\bibitem{KorigMO} V.~P.~Kostov, Hyperbolic polynomials and
  rigid moduli orders, 
  Publicationes Mathematicae Debrecen 100 (1-2) 119-128 (2022). 

  

\bibitem{KoAnn} V.~P.~Kostov, Univariate polynomials and the contractibility
  of certain sets, Annual of Sofia University ``St. Kliment Ohridski'',
  Faculty of Mathematics and Informatics 107 (2020), 11-35.

\bibitem{KoReMa22}  V.~P.~Kostov, Which Sign Patterns are Canonical?
  Results Math 77 (2022) No 6,
  paper 235.
  https://doi.org/10.1007/s00025-022-01769-3

\bibitem{KoCMA23} V.~P.~Kostov, Beyond Descartes' rule of signs,
  Constructive Mathematical Analysis, vol. 6 issue 2 (2023) 128-141.




\bibitem{KoSoz19} V.~P.~Kostov, Moduli of roots of hyperbolic polynomials
  and Descartes' rule of signs,
  Constructive Theory of Functions, Sozopol 2019
  (B. Draganov, K. Ivanov, G. Nikolov and R. Uluchev, Eds.), pp. 131-146
  Prof. Marin Drinov Academic Publishing House, Sofia, 2020.
  




\bibitem{La} E.~Laguerre, Sur la th\'eorie des \'equations num\'eriques,
  Journal de
  Math\'ematiques pures et appliqu\'ees, s. 3, t. 9, 99-146 (1883);
  {\oe}uvres 1,
  Paris, 1898, Chelsea, New-York, 1972, pp. 3–47.


\bibitem{Mes} B.~E.~Meserve, Fundamental Concepts of Algebra, New York,
  Dover Publications, 1982.

\end{thebibliography}
\end{document}